\newcommand{\IC}{\mathbb{C}}
\newcommand{\IN}{\mathbb{N}}
\newcommand{\IK}{\mathbb{K}}
\newcommand{\BH}{\mathcal{B}(H)}
\newcommand{\BX}{\mathcal{B}(X)}
\newcommand{\HCX}{\mathcal{HC}(X)}
\newcommand{\HCH}{\mathcal{HC}(H)}
\newcommand{\MiX}{\mathcal{M}\mathrm{ix}(X)}
\newcommand{\SPAN}{\mathop{\mathrm{span}}}
\newtheorem{prop}{Proposition}[section]
\newtheorem{lem}[prop]{Lemma}
\newtheorem{thm}[prop]{Theorem}
\newtheorem{cor}[prop]{Corollary}
\theoremstyle{definition}
\newtheorem{df}[prop]{Definition}
\theoremstyle{remark}
\newtheorem*{claim}{Claim}
\numberwithin{equation}{section}
\begin{document}

\title{Operators approximable by hypercyclic operators}
\author{James Boland}
\address{School of Mathematics\\Trinity College\\Dublin 2}
\email{jaboland@tcd.ie}
\thanks{This work was supported
by the Science Foundation Ireland under grant 11/RFP/MTH3187.}

\date{\currenttime{} \today}

\begin{abstract}
	We show that operators on a separable infinite dimensional
	Banach space $X$ of the form $I +S$, where $S$ is an operator with dense generalised kernel, 
	must lie in the norm closure of the hypercyclic operators on
	$X$, in fact in the closure of the mixing operators.
\end{abstract}

\maketitle

\section{Introduction}

Hypercyclic operators $T \colon X \to X$ on a Banach space $X$ (those
bounded linear operators
for which there is a vector $v \in X$ with dense orbit $\{ v, Tv, T^2v,
\ldots\}$) have been the subject of intense study in recent decades and
we refer to \cite{BayartMatheronBook} for a recent overview of the
topic.
Many problems that were posed about hypercyclicity have now been solved.
In particular, it is well
known that every separable infinite dimensional
Banach space $X$ supports hypercyclic operators and in fact many such
operators.
We use $\BX$ for the bounded linear operators on $X$ (always over $\IC$)
and $\HCX$ for the
subclass of hypercyclic operators.
An elegant result of \cite{HadwinNRR1979} allows one to conclude that,
for $T \in \HCX$,
the similarity class $\{ S^{-1} T S : S \in \BX \mbox{ invertible}\}$ is
always dense in 
the strong operator topology
of $\BX$ 
(and consists of hypercyclic operators).

In the operator norm on $\BX$, there are some obvious restrictions such
as $\|T\| > 1$ for $T \in \HCX$, but
Herrero
\cite{Herrero1979JFA} characterised the norm closure $\overline{\HCH}$
of $\HCH$
in spectral terms
for $H$ a (separable infinite dimensional) Hilbert space.
In fact the paper \cite{Herrero1979JFA} may be seen as flowing
from a considerable body of work (see \cite{HerreroBookI,AFHVBookII})
aimed at characterising the norm closure of the similarity class of a
Hilbert space operator, but \cite[Remark 2.3]{Herrero1979JFA} notes that
the necessary and sufficient conditions for a Hilbert space operator $T
\in \BH$ to
belong in $\overline{\HCH}$
are
also necessary for the Banach space case.
In the extreme case of pathological Banach spaces $Y$ with few operators,
the existence of which was recently established by Argyros and Haydon
\cite{ArgyrosHaydon2011}, all operators $T \in \mathcal{B}(Y)$ are of
the form $T = \lambda I + K$ with $\lambda$ a scalar and $K$ compact,
and then the Herero conditions reduce to $|\lambda| = 1$ and $K$
quasinilpotent (spectrum just $\{0\}$).

Our main result (Theorem ~\ref{thm:genker}) implies (Corollary ~\ref{cor:nilpapprox}) that (for $X$ a separable
infinite dimensional Banach space) operators $T = \lambda I + N$ with
$|\lambda| = 1$ and $N\in\BX$ nilpotent are all in the operator norm
closure of $\HCX$. 
So far we have been unable to decide whether this result remains true with $N$ replaced by a quasinilpotent $K$, 
but we have some results for special cases of quasinilpotent $K$.

The methods we use to obtain this approximation are largely elementary,
based on certain notions of generalised shifts --- which may be seen as
operators that admit a Jordan form representation in a somewhat loose
sense. We rely then on an observation of V. M\"uller
\cite[Theorem 1]{Muller2010IEOT} which is a simplification of \cite[Theorem 2.2]{BayartMatheronBook}, 
attributed to Grivaux and Shkarin, to
show that translates of such shifts by the identity operator $I$, or by
$\lambda I$ with $|\lambda| = 1$, are mixing (and hence hypercyclic).

We recall that $T \in \BX$ is called a \emph{mixing operator} if for
each pair $U, V$ of nonempty open subsets of $X$, there is $N$ such that
$T^n(U) \cap V \neq \emptyset \forall n > N$. We will use $\MiX$ to
denote the mixing operators on $X$.

\section{Generalised shifts}

For $T \in \BX$ we use the notation $\mathcal{N}^\infty(T)$ for the generalised
kernel $\bigcup_{n=1}^\infty \ker (T^n)$ and $\mathcal{R}^\infty(T) =
\bigcap_{n=1}^\infty T^n(X)$ for the hyperrange.

It will be convenient for us to introduce
the (lexicographic)
ordering $<$ on $\IN\times \IN$ given by
$(p.q)<(\ell,k)$ if either $p<l$ or $p=\ell$ and $q<k$.
Given $(\ell,k)\in \IN\times\IN$ with $k>1$ we refer to $(\ell,k-1)$
as the \emph{immediate predecessor} of $(\ell,k)$.
This helps to explain the notions of generalised (backward) shift that
we will use. Notice that we base these notions of shifts
on sets of vectors which span a
dense linear subspace, but need not be Schauder bases for $X$.

\begin{df}
	\label{def:genshift}
Let $X$ be a separable infinite dimensional Banach space.
Given a set of linearly independent vectors $\{x^\ell_k:(\ell,k)\in \{1,\ldots,n\}\times \IN\}$ whose span is dense in $X$,
we call $S\in \BX$ a \emph{generalised backward $n$-shift adapted to the set}
if for each $(\ell,k)\in \{1,\ldots,n\}\times \IN$,
$T(x^\ell_k)$ is a finite linear combination of $x^p_q$ with
$(p,q)<(\ell,k)$ and is such that 
the coefficient of the immediate predecessor, $x^\ell_{k-1}$, is non-zero
when $k>1$. In more formal terms, we insist that
\begin{equation}
	\label{eqn:genshiftdef}
	\begin{split}
	Sx^\ell_k=\sum^{k-1}_{i=1}a^{\ell,k}_ix^\ell_i
	+\sum_{i=1}^{\ell-1}\sum^{m^{\ell,k}_j}_{j=1}b^{\ell,k}_{i,j}x^i_j
	\qquad
	a^{\ell,k}_i, b^{\ell,k}_{i,j}\in\IK,\\
	a^{\ell,k}_{k-1}\neq 0 \mbox{ if } k > 1,
	m^{\ell,k}_j\in\IN.
\end{split}
\end{equation}

Similarly, given a set of linearly independent vectors $\{x^\ell_k:(\ell,k)\in \IN\times \IN\}$ whose span is dense in $X$,
we call $S\in \BX$ a \emph{generalised backward $\infty$-shift (adapted to
the set)}
if for each $(\ell,k)\in \IN\times \IN$,
$T(x^\ell_k)$ is a finite linear combination (\ref{eqn:genshiftdef}).
 
We think of the sequence $(x^\ell_k)_{k=1}^\infty$
as the $\ell^{\mathrm th}$ chain of the set.
\end{df}

\begin{lem}
\label{lem:bnslem}
If $S\in \BX$ is a
generalised $n$-shift or $\infty$-shift adapted to
a set $\{x^\ell _k\}$,
then $x^\ell_k\in \mathcal{R}^\infty (S)\cap \mathcal{N}^\infty (S)$ (for each $(\ell, k)$
in the set).
\end{lem}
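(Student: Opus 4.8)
The plan is to fix the dense subspace $V=\SPAN\{x^\ell_k\}$ and to prove the slightly stronger statements $V\subseteq\mathcal{N}^\infty(S)$ and $V\subseteq\mathcal{R}^\infty(S)$; since both $\mathcal{N}^\infty(S)=\bigcup_n\ker(S^n)$ and $\mathcal{R}^\infty(S)=\bigcap_n S^n(X)$ are linear subspaces of $X$, it suffices to place each generator $x^\ell_k$ inside them. The engine of both arguments is induction on the lexicographic order $<$ on $\IN\times\IN$. The point I would record first is that this order is \emph{well-founded}: any strictly decreasing sequence has non-increasing (hence eventually constant) first coordinates, after which the second coordinates would have to decrease forever in $\IN$, which is impossible. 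Thus induction on $(\ell,k)$ is legitimate even though a given $(\ell,k)$ has infinitely many predecessors --- what saves us is that by (\ref{eqn:genshiftdef}) each $Sx^\ell_k$ involves only \emph{finitely} many of them.

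For the generalised kernel I would argue by induction on $(\ell,k)$. The base case $(1,1)$ is immediate, since both sums in (\ref{eqn:genshiftdef}) are empty, so $Sx^1_1=0$ and $x^1_1\in\ker S$. For the inductive step, $Sx^\ell_k$ is a finite linear combination of vectors $x^p_q$ with $(p,q)<(\ell,k)$, each lying in $\mathcal{N}^\infty(S)$ by the induction hypothesis; as $\mathcal{N}^\infty(S)$ is a subspace we get $Sx^\ell_k\in\mathcal{N}^\infty(S)$, say $S^m(Sx^\ell_k)=0$, whence $S^{m+1}x^\ell_k=0$ and $x^\ell_k\in\mathcal{N}^\infty(S)$.

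For the hyperrange I would reduce everything to the single claim that $S$ maps $V$ onto $V$. Granting $S(V)=V$, induction gives $S^n(V)=V$ for every $n$, so $V=S^n(V)\subseteq S^n(X)$ for all $n$ and therefore $V\subseteq\bigcap_n S^n(X)=\mathcal{R}^\infty(S)$, as required. To prove $x^\ell_k\in S(V)$ (again by induction on $(\ell,k)$) I would exploit the vector one step higher in the same chain. Because the coefficient of the immediate predecessor is nonzero, applying (\ref{eqn:genshiftdef}) to $x^\ell_{k+1}$ gives
\[
Sx^\ell_{k+1}=a^{\ell,k+1}_k\,x^\ell_k+w,\qquad a^{\ell,k+1}_k\neq 0,
\]
where $w$ collects the remaining terms, all of which are multiples of vectors $x^p_q$ with $(p,q)<(\ell,k)$. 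By the induction hypothesis each such $x^p_q$ lies in $S(V)$, so $w=S\tilde w$ for some $\tilde w\in V$, and then $x^\ell_k=S\bigl((a^{\ell,k+1}_k)^{-1}(x^\ell_{k+1}-\tilde w)\bigr)\in S(V)$. Note that $x^\ell_{k+1}$ always exists, since each chain is indexed by all of $\IN$ in both the $n$-shift and the $\infty$-shift cases.

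I expect the hyperrange half to be the main obstacle, since it requires manufacturing preimages rather than merely tracking where vectors go. The two ideas that make it work are the reduction to the self-surjectivity $S(V)=V$, which upgrades a single preimage to membership in every $S^n(X)$, and the use of the nonzero immediate-predecessor coefficient, which lets the higher chain vector $x^\ell_{k+1}$ supply the leading term while the well-founded induction absorbs the lower-order correction $w$.
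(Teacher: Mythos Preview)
Your proof is correct. Both your argument and the paper's rest on the same two pillars---the finiteness of the sums in (\ref{eqn:genshiftdef}) for the generalised-kernel half, and the nonzero immediate-predecessor coefficient for the hyperrange half---so the underlying ideas coincide. The organisation, however, is genuinely different. The paper runs a nested induction (outer on $\ell$, inner on $k$) and for the hyperrange computes explicit $S^r$-preimages of the form $x^\ell_{r+k+1}/\prod a^{\ell,i+k+1}_{i+k}$, showing directly that $x^\ell_{k+1}\in S^r(X)$ modulo terms already known to lie in $\mathcal{R}^\infty(S)$. You instead perform a single well-founded induction on the lexicographic order and reduce the hyperrange statement to the self-surjectivity $S(V)=V$, which then iterates for free. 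Your packaging is cleaner: you need only produce one $S$-preimage per vector rather than track the telescoping product of coefficients through $r$ applications of $S$, and the well-founded induction absorbs the base case automatically. The paper's version, on the other hand, is slightly more concrete in that it exhibits the preimages explicitly.
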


\begin{proof}
The proof is a simple induction on $\ell$.

For $\ell=1$, $Sx^1_k=\sum^{k-1}_{i=1}a^{1,k}_ix^1_i \in \SPAN\{x^1_i :
1 \leq i \leq k -1\}$, $x^1_1 \in \ker
S$. Clearly $x^1_k \in \ker S^k \subseteq \mathcal{N}^\infty(S)$ for each $k$. 
Also, 
\[
	S^r\left(\frac{x^1_{\ell+1}}{\prod_{i=1}^{r}a^{1,i+1}_i}\right)
	=x^1_1
\]
and so $x^1_1\in S^r(X)$ for all $r$. Hence $\IC x^1_1\subseteq
\mathcal{R}^\infty (S)$.
Using a subinduction on $k$,
assume
$x^1_i\in \mathcal{R}^\infty (S)$ for $1\le i \le k$.
Then
\[
	S^r\left(\frac{x^1_{r+k+1}}{\prod_{i=1}^{r}a^{1,i+k+1}_{i+k}}\right)=x^1_{k+1}+u
\]
with $u\in \SPAN \{ x^1_i : 1\le i \le k\} \subseteq
\mathcal{R}^\infty (S)$
and $u \in S^r(X) \Rightarrow x^1_{k+1} \in S^r(X)$ for $r \geq 1$. Hence
$x^1_{k+1} \in \mathcal{R}^\infty(S)$. By induction we get $x^1_k \in \mathcal{R}^\infty(S)$
for all $k$.

Similar arguments (using finiteness of the sums
(\ref{eqn:genshiftdef}))
allow us to show the induction step on $\ell$.
\end{proof}

The following is a simplified version of \cite[Theorem
1]{Muller2010IEOT} which is a simplification of \cite[Theorem 2.2]{BayartMatheronBook}.

\begin{thm}
\label{thm:muller}
Let $X$ be a separable Banach space, $S\in \BX$ with
$\mathcal{N}^\infty(S)\cap \mathcal{R}^\infty(S)$ dense in $X$ and let $\lambda \in \IC$
with $|\lambda|=1$. Then $\lambda+S \in \MiX$.
\end{thm}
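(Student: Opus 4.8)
The plan is to verify the definition of mixing directly on a convenient dense set. Set $D=\mathcal{N}^\infty(S)\cap \mathcal{R}^\infty(S)$, which is dense by hypothesis; it is a linear subspace, it is invariant under $T:=\lambda+S$ (since $S$ carries each of $\mathcal{N}^\infty(S)$ and $\mathcal{R}^\infty(S)$ into itself), and on $D$ the operator $S$ is locally nilpotent, so that $T=\lambda(I+\lambda^{-1}S)$ is injective there with a formal unipotent inverse $\lambda^{-1}\sum_{k\ge 0}(-\lambda^{-1}S)^k$ acting as a finite sum on each vector. Because mixing need only be tested on pairs of nonempty open sets, and by density these may be taken to be small balls centred at points of $D$, it will suffice to prove: for all $x,y\in D$ and $\varepsilon>0$ there is $N$ such that for every $n>N$ one can find $z\in X$ with $\|z-x\|<\varepsilon$ and $\|T^nz-y\|<\varepsilon$.

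Since $|\lambda|=1$, writing $R=\lambda^{-1}S$ gives $\|T^nz-y\|=\|(I+R)^nz-\lambda^{-n}y\|$, so the task becomes to steer $(I+R)^n$ from near $x$ to near the rotated target $\lambda^{-n}y$. Two structural features are available. First, local nilpotency: if $S^px=0$ then, for fixed $x$, the forward image $(I+R)^nx=\sum_{k=0}^{p-1}\binom{n}{k}R^kx$ has only finitely many terms and grows merely polynomially in $n$. Second, the hyperrange: since $y\in\mathcal{R}^\infty(S)$ it possesses $S$-preimages of every order, which I would use to manufacture a correction $w_n$ with $(I+R)^nw_n$ close to $\lambda^{-n}y$, and then take $z=x+w_n$. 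In favourable situations this can be bypassed entirely: if $S$ has eigenvectors $\ker(S-\mu)$ for $\mu$ ranging over a punctured neighbourhood of $0$ whose spans are dense (as for genuine weighted backward shifts), then $T=\lambda+S$ has eigenvalues $\lambda+\mu$ on both sides of the unit circle, and the Godefroy--Shapiro eigenvalue criterion delivers mixing at once.

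The difficulty, and the step I expect to be the real obstacle, is that no such eigenvectors need exist in general (for example when $S$ is quasinilpotent on a Banach space with few operators, so that $T$ has no eigenvalue off the unit circle and Godefroy--Shapiro is unavailable). One is then forced to build the corrections $w_n$ by hand, and here the combinatorics bite: to reach $y$ after $n$ applications of $T$ the preimage $w_n$ must have $S$-depth of order $n$, so that $(I+R)^nw_n=\sum_{k=0}^{n}\binom{n}{k}R^kw_n$ now runs over all $k\le n$, with central binomial coefficients of exponential size. Showing that $w_n$ can nonetheless be chosen \emph{small} while keeping $\|(I+R)^nw_n-\lambda^{-n}y\|$ small --- that is, that the unwanted terms cancel against the range filtration $\mathcal{R}^\infty(S)\subseteq\cdots\subseteq S(X)\subseteq X$ --- is precisely the quantitative heart of the matter, the estimate that the Grivaux--Shkarin argument supplies and that Müller streamlines. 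I would carry it out by letting the preimage order grow with $n$ at a controlled rate and exploiting the cancellation in $\sum_k\binom{n}{k}(-\lambda^{-1}S)^k$ that underlies the existence of the unipotent inverse on $D$.
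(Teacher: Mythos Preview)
The paper does not prove this theorem: it is quoted, with attribution, as a simplified form of \cite[Theorem~1]{Muller2010IEOT}, itself a streamlining of the Grivaux--Shkarin argument in \cite[Theorem~2.2]{BayartMatheronBook}. So there is no proof in the paper to compare against beyond the bare citation.

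Your outline correctly identifies the architecture of that cited argument---work on the dense $T$-invariant subspace $D=\mathcal{N}^\infty(S)\cap\mathcal{R}^\infty(S)$, exploit local nilpotency for the forward dynamics and the hyperrange for preimages---but it is not a proof, and you say as much: the ``quantitative heart of the matter'' is explicitly deferred to the literature. Two concrete gaps remain. First, your decomposition $z=x+w_n$ with $(I+R)^nw_n\approx\lambda^{-n}y$ forgets the drift: $(I+R)^n z=(I+R)^nx+(I+R)^nw_n$, and you have just observed that $(I+R)^nx$ grows polynomially in $n$, so it does \emph{not} follow that $(I+R)^nz$ is near $\lambda^{-n}y$. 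The correction must absorb this drift as well, and your sketch does not say how. Second, even granting that, the plan to ``let the preimage order grow with $n$ at a controlled rate'' is not yet an argument: the hypothesis $y\in\mathcal{R}^\infty(S)$ supplies preimages $u_m$ with $S^mu_m=y$ but with no bound on $\|u_m\|$, while the binomial coefficients you must tame carry total mass $2^n$. M\"uller's actual proof threads this needle by a specific choice of correction exploiting that the target also lies in $\mathcal{N}^\infty(S)$ (so only a fixed finite number of binomial terms survive on the \emph{output} side) together with the combinatorial identity behind the formal inverse; your final paragraph gestures at this but does not reproduce the mechanism.
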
 

\begin{cor}
\label{cor:bns}
If $X$ is a Banach space,
$S\in \BX$ a generalised $n$-shift or $\infty$-shift (adapted to
some set  $\{x^\ell_k\}$) and if $\lambda \in \IC$ with $|\lambda|=1$, then 
$\lambda +S \in \MiX$.
\end{cor}

\begin{proof}
This follows from Theorem~\ref{thm:muller} and Lemma~\ref{lem:bnslem}
\end{proof}

\section{Approximating operators with dense generalised kernel}

Our aim now is to show that if an operator $S \in \BX$ (with
$X$ as usual a separable infinite dimensional Banach space) has dense generalised kernel 
then it can be
approximated in norm by generalised shifts (as defined in
Definition~\ref{def:genshift}). Our approach is to first find linearly
independent vectors
\[
	\{ y^\ell_j : 1 \leq \ell < \infty, 1 \leq j \leq d_\ell \}
\]
with dense linear span in $X$ 
which are well adapted to $S$ in a sense
somewhat similar to a forward shift version of
(\ref{eqn:genshiftdef}). There will however be some
important differences between what we can achieve for general $S$ and what
is required in (\ref{eqn:genshiftdef}). 
In particular, the 
vectors $y^\ell_j$ will not necessarily belong to $\mathcal{R}^\infty(S)$ and our approach will be to adjust the operator $S$ (with
an adjustment of at most a prescribed $\varepsilon> 0$ in norm) so as
to be able to apply Corollary~\ref{cor:bns}.

For any sequence or set of vectors $\{x_n\}$ we will use square brackets
$[x_n]$ for the closure of their linear span. 

We describe our construction as an algorithm, starting from a fixed
sequence $\{x_n : n \in \IN\}\subset \mathcal{N}^\infty(S)$ so that $[x_n : n \in \IN] = X$ (which
certainly exists since $X$ is assumed to be separable). We suppose also
that $\{x_n : n \in \IN\}$ is algebraicly linearly independent (or at
least that $x_1 \neq 0$).

We begin by setting
\begin{equation}
	\label{eqn:y-1-1}
	y^1_1 = x_1.
\end{equation}
Then there is a smallest $r \geq 1$  with 
\[
	S^r y^1_1 =0.
\]
and we choose $d_1 = r$. Then
the space $E_1 = \SPAN\{x_1, S x_1, \ldots, S^{r-1} x_1\}$ is
$S$-invariant and
we let
\begin{equation}
	\label{eqn:y-1-j}
	y^1_{j} = S^{j-1} y^1_1 \quad (2 \leq j \le  d_1 ).
\end{equation}

Assuming that we have already chosen $d_1, \ldots, d_\ell \in \IN $ and vectors
$\{y^i_j : 1 \leq i \leq \ell, 1 \leq j < d_i +1\}$, we will use the
notation
\begin{equation}
	\label{eqn:E-m}
	E_m = [ y^i_j : 1 \leq i \leq m, 1 \leq j \le d_i]
\end{equation}
(even for $m=0$ when $E_m = \{0\}$)
and insist during the algorithm
that $S(E_m) \subseteq E_m$ holds for $m =1, 2, \ldots, \ell$
as well as
\begin{equation}
	\label{eqn:shift-cond}
	S y^{m}_j = y^{m}_{j+1}
	\qquad (1 \leq m \leq \ell, 1 \leq j < d_{m}),
\end{equation}
\begin{equation}
	\label{eqn:shift-cond2}
	S y^{m}_{d_m}  \in E_{m-1}
	\qquad (1 \leq m \leq \ell),
\end{equation}
and
\begin{equation}
	\label{eqn:ind-cond}
	\SPAN\{ y^{m}_j : 1 \leq j \leq d_{m} \} \cap E_{m-1} = \{0\}
	\qquad (1 \leq m \leq \ell).
\end{equation}
We will have in addition that 
\begin{equation}
	\label{eqn:useupxns}
	\{ x_1, \ldots, x_m\} \subset E_m \quad (1 \leq m \leq \ell),
\end{equation}
\begin{equation}
	\label{eqn:use-only-xns}
	y^m_1 \in \{ x_1, x_2, \ldots\} \quad (1 \leq m \leq \ell)
\end{equation}
and
\begin{equation}
	\label{eqn:ys-lin-indep}
	\{ y^m_j : 1 \leq i \leq \ell , 1 \leq j  \leq d_m \} \mbox{
	linearly independent}.
\end{equation}
Notice that the construction so far
has ensured that these conditions hold initially for $\ell =1$.

The next step is to choose the smallest $n$ such that $x_n \notin E_\ell$
and to put $y^{\ell+1}_1  = x_n$ (which will ensure that
(\ref{eqn:useupxns}) and (\ref{eqn:use-only-xns} hold for $m = \ell +1$)
and then there is a smallest $r \geq 1$ such that
\begin{equation}
	\label{eqn:infinitecase}
	S^{r+1} y^{\ell+1}_1 \in \SPAN \{ y^{\ell+1}_1, S
		y^{\ell+1}_1, S , \ldots ,
		        S^r y^{\ell+1}_1 \} + E_\ell
			\qquad (\forall r \in \IN),
\end{equation}
We put $d_{\ell+1} = r$. Then we take
\[
	y^{\ell+1}_{j+1} =  S^j y^{\ell+1}_1 \qquad (1 \leq j < d_{\ell +1}).
\]
It is clear then that
(\ref{eqn:shift-cond}) and
(\ref{eqn:ind-cond}) hold for $m = \ell+1$. 
To show (\ref{eqn:shift-cond2}), we can consider the operator induced by $S$ on the quotient
$X/E_\ell$, which we temporarily denote $\tilde S$. Writing $\tilde x$
for the coset $x + E_\ell$ we then have $\tilde S (\tilde x) =
\widetilde{Sx}$ and $\tilde S$ is a nilpotent operator with an
invariant subspace $\SPAN \{ \widetilde { y^{l+1}_j} : 1 \leq j \leq
r\}$ of dimension $r$. Hence ${\tilde S}^r = 0$ on this subspace
and
(\ref{eqn:shift-cond2}) holds for $m = \ell+1$, so that also $T(E_{\ell+1})
\subseteq E_{\ell+1}$.
Finally, (\ref{eqn:ys-lin-indep}) is easy to check for $m=\ell +1$.

\begin{thm}
\label{thm:genker}
Let $X$ be a separable infinite dimensional Banach space and let $S\in
\BX$ be such that $\mathcal{N}^\infty (S)$ is dense in $X$. 
Then $S$ is in the norm closure of the generalised backward 1-shift operators  on $X$ 
so $T=\lambda +S \in \overline {\MiX}$ (the norm closure of $\MiX$) for each $\lambda\in\IC$ with $|\lambda|=1$.
\end{thm}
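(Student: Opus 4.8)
The plan is to use the vectors $\{y^m_j\}$ produced by the algorithm to realise $S$ as a norm-small perturbation of a genuine generalised backward $1$-shift, and then to invoke Corollary~\ref{cor:bns}. First I would assemble the chains into a single sequence by reversing each chain and concatenating them: writing $s_0=0$ and $s_m=d_1+\cdots+d_m$, I set $z_{s_{m-1}+i}=y^m_{d_m+1-i}$ for $1\le i\le d_m$, so that $z_{s_{m-1}+1}=y^m_{d_m}$ and $z_{s_m}=y^m_1$. These $z_k$ are linearly independent with $[z_k]=X$, and $E_m=\SPAN\{z_1,\dots,z_{s_m}\}$ (finite dimensional, hence closed). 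By (\ref{eqn:shift-cond}) one gets $Sz_k=z_{k-1}$ for every index $k$ not of the form $s_m+1$, while at the junction indices $k=s_m+1$ ($m\ge0$, with $E_0=\{0\}$) condition (\ref{eqn:shift-cond2}) gives $Sz_{s_m+1}=Sy^{m+1}_{d_{m+1}}\in E_m=\SPAN\{z_1,\dots,z_{s_m}\}$. Thus, after relabelling $x^1_k:=z_k$, the operator $S$ already sends each $x^1_k$ into the span of strictly earlier vectors; the only clause of Definition~\ref{def:genshift} that can fail is that the coefficient of the immediate predecessor $z_{s_m}$ in $Sz_{s_m+1}$ may vanish. (This is exactly why the $y^m_j$ need not lie in $\mathcal{R}^\infty(S)$: a chain can dead-end into $E_m$.)

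Next I would repair the junction coefficients by a perturbation that is small in norm yet preserves the lower-triangular form. For each $m\ge1$, condition (\ref{eqn:ind-cond}) gives $z_{s_m+1}=y^{m+1}_{d_{m+1}}\notin E_m$, so $\dist(z_{s_m+1},E_m)>0$ and Hahn--Banach supplies $\phi_m\in X^*$ with $\phi_m|_{E_m}=0$, $\phi_m(z_{s_m+1})\neq0$ and $\|\phi_m\|=1/\dist(z_{s_m+1},E_m)$. I then define
\[
	Px=\sum_{m\ge1}\varepsilon_m\,\phi_m(x)\,z_{s_m}\qquad(x\in X),
\]
choosing scalars $\varepsilon_m$ with $|\varepsilon_m|\,\|\phi_m\|\,\|z_{s_m}\|\le\varepsilon\,2^{-m}$, so that the sum converges in operator norm and $\|P\|\le\varepsilon$. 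The point of letting $\phi_m$ annihilate $E_m$ (rather than be biorthogonal to the $z_k$) is that $\phi_m(z_k)\neq0$ forces $z_k\notin E_m$, i.e.\ $k>s_m$; hence each nonzero summand of $Pz_k$ is a multiple of $z_{s_m}$ with $s_m<k$, so $Pz_k\in\SPAN\{z_i:i<k\}$ and $S+P$ remains lower-triangular. For a non-junction $k$ the predecessor index $k-1$ is never of the form $s_m$, so $P$ contributes nothing to the $z_{k-1}$-coefficient and $(S+P)z_k=z_{k-1}+\cdots$ keeps predecessor coefficient $1$; at a junction $k=s_m+1$ the $z_{s_m}$-coefficient of $(S+P)z_{s_m+1}$ equals its original value plus $\varepsilon_m\phi_m(z_{s_m+1})$, which I make nonzero by a suitable tiny nonzero choice of $\varepsilon_m$. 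Thus $S_\varepsilon:=S+P$ is a genuine generalised backward $1$-shift adapted to $\{z_k\}$.

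Finally I would conclude. By Lemma~\ref{lem:bnslem} applied to $S_\varepsilon$, every $z_k$ lies in $\mathcal{N}^\infty(S_\varepsilon)\cap\mathcal{R}^\infty(S_\varepsilon)$; since $[z_k]=X$ this intersection is dense, so Corollary~\ref{cor:bns} yields $\lambda+S_\varepsilon\in\MiX$ for every $\lambda$ with $|\lambda|=1$. Because $\|S-S_\varepsilon\|=\|P\|\le\varepsilon$ with $\varepsilon>0$ arbitrary, $S$ lies in the norm closure of the generalised backward $1$-shifts, and $\lambda+S=\lim_{\varepsilon\to0}(\lambda+S_\varepsilon)\in\overline{\MiX}$.

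I expect the main obstacle to be the second paragraph: producing a norm-small perturbation that simultaneously forces a nonzero immediate-predecessor coefficient at every junction and yet does not destroy the backward-shift form, all while $\{z_k\}$ is merely a dense linearly independent set and not a Schauder basis (so the coordinate functionals need not be bounded). The device that resolves this is to take the $\phi_m$ annihilating the finite-dimensional spaces $E_m$: this automatically enforces lower-triangularity and keeps the $\phi_m$ bounded, after which summability of $(\varepsilon_m)$ controls $\|P\|$. It is precisely this step that also cures the failure $y^m_j\notin\mathcal{R}^\infty(S)$, since for the repaired single infinite chain Lemma~\ref{lem:bnslem} places every $z_k$ back into the hyperrange.
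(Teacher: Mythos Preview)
Your argument is correct and follows essentially the same route as the paper: run the chain algorithm, reverse and concatenate the chains into a single sequence, add summable rank-one Hahn--Banach corrections at the junction indices to force a nonzero immediate-predecessor coefficient, and then invoke Corollary~\ref{cor:bns}. The only point of departure is the particular Hahn--Banach functional used: the paper chooses biorthogonal-type functionals $y^{\ell\,*}_j$ with $y^{\ell\,*}_j(y^m_k)=\delta_{\ell,m}\delta_{j,k}$ for $m\le\ell$, whereas you take $\phi_m$ annihilating the whole finite-dimensional block $E_m$; your choice makes the preservation of lower-triangularity immediate (any nonzero term $\phi_m(z_k)z_{s_m}$ forces $k>s_m$), while the paper's choice pins down the action on individual basis vectors more explicitly. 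Both devices accomplish the same thing and the remainder of the proof is identical.
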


\begin{proof}
Let $\{x_n\}_{n=1}^\infty\subset \mathcal{N}^\infty (T)$ be a linearly independent
set whose span is dense in $X$.
We now apply the above construction to produce vectors
$\{ y^\ell_j : 1 \leq \ell < L + 1, 1 \leq j \le d_\ell \}$ which
satisfy the conditions
(\ref{eqn:shift-cond}) through
(\ref{eqn:ys-lin-indep}) for all $\ell < \infty$.

For each $(\ell, j)$  with $1 \leq j \leq d_\ell$,
we can use the Hahn-Banach theorem and 
choose $y^{\ell \, *}_j \in X^*$ (the dual space of $X$) so that
\[
	y^{\ell \, *}_j (y^m_k) = \delta_{\ell, n} \delta_{j,k}
	\mbox { if } m \leq \ell.
\]

Given any $\varepsilon > 0$, we
then choose $\varepsilon_\ell > 0$ such that
\[
	\sum_{\ell=1}^\infty \varepsilon_\ell \| y^{\ell +1 \, *}_1\| \|
	y^\ell_{d_\ell}\| < \varepsilon.
\]
We now define $S_\ell \colon X \to X$ for $\ell \geq 2$
by $S_\ell =0$ if
$y^{\ell -1, \, *}_1( S  y^\ell_{d_\ell}) \neq 0$ and
$S_\ell(x) = \varepsilon_\ell y^{\ell -1, \, *}_1(x)  y^\ell_{d_\ell}$
otherwise. Then $\|S_\ell\| \leq \varepsilon_\ell \| y^{\ell -1 \,
*}_1\| \| y^\ell_{d_\ell}\|$ and so $\sum_{\ell=1}^\infty \|S_\ell\|<
\varepsilon$. Put $S' = S + \sum_{\ell=2}^\infty S_\ell$.

We claim that $S'$ is then a generalised $1$-shift in the sense of 
Definition~\ref{def:genshift} with respect to the set obtained by
linearly ordering $\{ y^\ell_j : \ell \in \IN, 1 \leq j \leq d_\ell\}$
by taking those vectors in the order
\[
	y^1_{d_1}, \ldots,
	y^1_1, y^2_{d_2}, \ldots,
	y^2_{1}, y^3_{d_3}, \ldots.
\]
To verify this notice that for $(\ell, 1) \leq (k, j)$ we have
$S_\ell(y^k_j) \in E_{\ell-1}$ but
$S_\ell(y^k_j) =0$ if $\ell > k$.
Hence $S'(y^k_j) - y^k_{j+1}\in E_{k-1}$ if $j < d_k$ by
(\ref{eqn:shift-cond})
while
$S'(y^k_j) \in E_{k-1}$ if $j = d_k$
by
(\ref{eqn:shift-cond2}).
We have organised that 
$y^{\ell -1, \, *}_1( S'  y^\ell_{d_\ell}) \neq 0$ for $\ell \geq  2$,
which is the remaining condition needed for $S'$ to be a generalised
$1$-shift.

Since, for $|\lambda| =1$, we have $\lambda + S' \in \MiX$ by
Corollary~\ref{cor:bns}, and
$\|(\lambda +S) - (\lambda + S')\| < \varepsilon$ , we conclude that
$\lambda +S' \in \overline{\MiX}$ as required.
\end{proof}
\begin{cor}
\label{cor:nilpapprox}
Let $X$ be a separable infinite dimensional Banach space and let $N\in
\BX$ be a nilpotent.
Then $\lambda +N \in \overline {\MiX}$.
\end{cor}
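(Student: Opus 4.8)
The plan is to deduce this directly from Theorem~\ref{thm:genker}, since the whole content of the corollary is already contained there. The only thing specific to the nilpotent case that I need to check is that a nilpotent operator automatically has dense generalised kernel. Indeed, if $N\in\BX$ is nilpotent then there is some $k\in\IN$ with $N^k=0$, so that $\ker(N^k)=X$. Since
\[
	\mathcal{N}^\infty(N)=\bigcup_{n=1}^\infty \ker(N^n)\supseteq \ker(N^k)=X,
\]
we conclude that $\mathcal{N}^\infty(N)=X$, which is of course dense in $X$.

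With this observation in hand, I would simply invoke Theorem~\ref{thm:genker} with $S=N$. The single hypothesis of that theorem --- that $\mathcal{N}^\infty(S)$ be dense in $X$ --- has just been verified, so the conclusion $\lambda+N\in\overline{\MiX}$ follows at once for each $\lambda\in\IC$ with $|\lambda|=1$. (Here the restriction $|\lambda|=1$ is inherited from Theorem~\ref{thm:genker}, which in turn reflects the hypothesis of Theorem~\ref{thm:muller}; the corollary should be read with this restriction in force, in agreement with the statement of the main result in the introduction.)

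There is essentially no obstacle to overcome: once one notices that nilpotence forces $\mathcal{N}^\infty(N)=X$, the result is immediate. I do not expect to need the explicit shift construction of Section~3 again, nor any estimate, since Theorem~\ref{thm:genker} packages all of that. One could alternatively remark that a nilpotent $N$ is already of generalised backward shift type on a suitable Jordan-style set of vectors, making it lie \emph{in} $\overline{\MiX}$ rather than merely in the closure; but routing the argument through the density hypothesis of Theorem~\ref{thm:genker} is cleaner and avoids any extra bookkeeping about spanning and linear independence.
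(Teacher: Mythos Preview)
Your proposal is correct and matches the paper's approach: the paper gives no explicit proof of Corollary~\ref{cor:nilpapprox}, treating it as an immediate consequence of Theorem~\ref{thm:genker}, which is exactly what you do by noting that nilpotence forces $\mathcal{N}^\infty(N)=X$. Your remark about the implicit restriction $|\lambda|=1$ is also apt.
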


We say that a sequence of vectors $\{x_n\}_{n=1}^\infty$ is \emph{minimal} if there exist functionals $x^*_n\in X^*$ such that 
$x^*_n(x_m)=\delta_{n,m}$ and a minimal sequence of vectors is \emph{fundamental} if its linear span is dense in $X$. 
The $x_n^*$ are known as biorthogonal functionals for $\{x_n\}_{n=1}^\infty$.
We call $\{x_n\}_{n=1}^\infty$ \emph{normalised} if $\|x_n\|=1$ for each $n$.
In particular a Schauder basis is a minimal fundamental sequence.

By forward shift with respect to a basis $\{x_n\}_{n=1}^\infty$ we mean an operator $S\in \BX$ satisfying $Sx_n=w_nx_{n+1}$ for $n\ge 1$, where $w_n\in\IC$ are weights.
We will also consider bilateral shifts when the basis is $\{x_n\}_{n=-\infty}^\infty$.

In the next Proposition we consider a more general type of operator akin to a mixture of shifts but we do not insist on a Schauder basis.

\begin{prop}
\label{lem:gen}
Let $X$ be a separable infinite dimensional Banach space.
Suppose $S\in B(X)$ is an operator such that 
$$Sx^i_j=a^i_jx^i_{j+1}\mbox{ and }Sy^i_j=b^i_jy^i_{j+1}$$ 
where $a^i_j,b^i_j\in \IK$ and 
$$\{x^i_j:1\le i< I_1+1, 1\le j<\infty\}\cup\{y^i_j:1\le i<I_2+1,-\infty< j<\infty\}$$ 
is a normalised minimal fundamental set for $X$ with biorthogonal functionals 
$$\{x^{i*}_j:1\le i< I_1+1, 1\le j<\infty\}\cup\{y^{i*}_j:1\le i< I_2+1, -\infty< j<\infty\}$$
where $0\le I_1,I_2\le \infty$ (but we can't have $I_1=I_2=0$).
 
If for each $i$ with $1\le i<I_1+1$ there is an increasing sequence $n^i_j$ with $|a^i_{n^i_j}|\|x^i_{n^i_j}\|\rightarrow 0$
and for each $i$ with $1\le i<I_2+1$ there is an increasing sequence $m^i_j$ with $|b^i_{m^i_j}|\|y^i_{m^i_j}\|\rightarrow 0$ 
then $\lambda +S$ is in the closure of the mixing operators.
\end{prop}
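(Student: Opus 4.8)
The plan is to reduce everything to Corollary~\ref{cor:bns}: since $|\lambda| = 1$, it suffices to show that for every $\varepsilon > 0$ there is an operator $S' \in \BX$ with $\|S - S'\| < \varepsilon$ that is a generalised backward shift (in the sense of Definition~\ref{def:genshift}) adapted to a suitable reordering of the given fundamental set. Then $\lambda + S' \in \MiX$ by Corollary~\ref{cor:bns}, and $\|(\lambda + S) - (\lambda + S')\| = \|S - S'\| < \varepsilon$ gives $\lambda + S \in \overline{\MiX}$ as required. The difficulty is that $S$ acts as a \emph{forward} shift along each chain, hence is injective on each chain and $\mathcal{N}^\infty(S)$ is far from dense; so $S$ itself is nowhere near a generalised backward shift, and genuine (small) perturbation is needed.

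I would remedy this exactly by the mechanism in the proof of Theorem~\ref{thm:genker}: reverse each chain, break it into blocks that terminate in the kernel, and then reconnect the blocks with small rank-one perturbations. Concretely, along a one-sided chain $(x^i_j)_{j \geq 1}$ I would use the given increasing sequence $n^i_1 < n^i_2 < \cdots$ to cut the chain, replacing $S$ by an operator sending $x^i_{n^i_t} \mapsto 0$ for each $t$; this splits the chain into finite Jordan blocks $\{x^i_{n^i_{t-1}+1}, \ldots, x^i_{n^i_t}\}$, each of which, read in the reverse order $x^i_{n^i_t}, x^i_{n^i_t - 1}, \ldots$, is a finite backward chain whose bottom $x^i_{n^i_t}$ lies in $\ker S'$. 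For a bilateral chain $(y^i_j)_{j \in \IZ}$ I would cut only at the positions $m^i_t$ (which increase to $+\infty$): the piece $\{y^i_j : j \leq m^i_1\}$, read in decreasing $j$, is already an infinite backward chain with bottom $y^i_{m^i_1}$, while the pieces between consecutive cuts are finite backward chains.

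I would then assemble infinite backward chains from these pieces. The reversed semi-infinite pieces of the bilateral chains are themselves infinite chains, and I would string the countably many finite blocks together, top-to-bottom, into further infinite chains, so that every basis vector lies in some infinite backward chain. As in Theorem~\ref{thm:genker}, at each seam I add a rank-one operator $c\,\xi(\cdot)\,v$, where $\xi$ is the biorthogonal functional of the top vector of one block and $v$ is the bottom vector of the preceding block, so as to make the immediate-predecessor coefficient nonzero; choosing these scalars $c$ summably small keeps the total gluing perturbation below $\varepsilon/2$. After cutting and gluing, $S'$ sends every basis vector to a nonzero multiple of its immediate predecessor plus earlier vectors, so $S'$ is a generalised backward shift adapted to the ordering in which the chains are listed and each chain is traversed from its bottom, and Corollary~\ref{cor:bns} applies.

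The main obstacle is bounding the norm of the \emph{cutting} perturbation. Killing the link at $x^i_{n^i_t}$ requires the rank-one operator $a^i_{n^i_t}\,x^{i*}_{n^i_t}(\cdot)\,x^i_{n^i_t + 1}$, of norm $|a^i_{n^i_t}|\,\|x^{i*}_{n^i_t}\|\,\|x^i_{n^i_t + 1}\|$, and similarly for the bilateral cuts. The hypothesis controls the factor $|a^i_{n^i_t}|\,\|x^i_{n^i_t}\|$ (which after normalisation is just $|a^i_{n^i_t}| \to 0$), but not the norm of the biorthogonal functional, and since infinitely many cuts are needed to exhaust each chain into finite blocks, I must ensure $\sum_t |a^i_{n^i_t}|\,\|x^{i*}_{n^i_t}\|\,\|x^i_{n^i_t+1}\|$ is finite and small. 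I expect to close this gap by passing to a subsequence of the given cut positions along which this product is summable with sum below $\varepsilon/2$, using the control on the functionals afforded by the minimality of the system together with the decay in the hypothesis; this subsequence selection, combined with the freely chosen summably-small gluing scalars, is the delicate point on which the whole estimate $\|S - S'\| < \varepsilon$ rests.
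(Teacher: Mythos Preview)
Your cutting step is exactly the paper's: subtract a nuclear operator
\[
K=\sum_{i,j} a^i_{n^i_{k_j}}\,x^{i*}_{n^i_{k_j}}\otimes x^i_{n^i_{k_j}+1}
+\sum_{i,j} b^i_{m^i_{k_j}}\,y^{i*}_{m^i_{k_j}}\otimes y^i_{m^i_{k_j}+1}
\]
built from a subsequence of the given cut positions chosen so that $\|K\|<\varepsilon/2$. Where you diverge is what happens next. You propose to glue the resulting finite Jordan blocks into infinite backward chains by hand, adding further small rank-one seam operators, and then invoke Corollary~\ref{cor:bns}. The paper takes a shorter route: once every basis vector lies in $\mathcal{N}^\infty(S-K)$, that space is dense, so Theorem~\ref{thm:genker} applies directly to $S-K$ and produces a generalised backward $1$-shift $B$ with $\|(S-K)-B\|<\varepsilon/2$, hence $\|S-B\|<\varepsilon$. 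Your gluing is precisely the construction inside the proof of Theorem~\ref{thm:genker}, so you are inlining a theorem you could simply cite; the paper's route is cleaner and avoids having to manage the bilateral tails and the seam scalars explicitly.

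On the obstacle you flag at the end: the paper does exactly what you predict, passing to a subsequence along which $|a^i_{n^i_{k_j}}|\,\|x^{i*}_{n^i_{k_j}}\|$ is summable, using $\|x^i_{n^i_{k_j}+1}\|=1$ from normalisation. It simply asserts that $|a^i_{n^i_j}|\,\|x^{i*}_{n^i_j}\|\to 0$; since the system is normalised, the stated hypothesis gives only $|a^i_{n^i_j}|\to 0$, so your worry that minimality alone does not bound $\|x^{i*}_{n^i_j}\|$ is well placed---the paper's proof is written as though the hypothesis carried the starred functional rather than the vector.
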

\begin{proof}
Let $\epsilon>0$ be given and let $\epsilon^i_j>0$,$\delta^i_j>0$ be such that 
$$\sum_{i=1}^{I_1}\sum_{j=1}^\infty\epsilon^i_j+\sum_{i=1}^{I_2}\sum_{j=1}^\infty\delta^i_j<\epsilon/2.$$
We have that $|a^i_{n^i_j}|\|x^{i*}_{n^i_j}\|\rightarrow 0$ so for each $i$ choose a subsequence $n^i_{k_j}$ such that $|a^i_{n^i_{k_j}}|\|x^{i*}_{n^i_{k_j}}\|<\epsilon^i_j$
and similarly choose a subsequence $m^i_{k_j}$ such that $|b^i_{m^i_{k_j}}|\|y^{i*}_{m^i_{k_j}}\|<\delta^i_j$.
Let $$K=\sum_{i=1}^{I_1}\sum_{j=1}^\infty a^i_{n^i_{k_j}}x^{i*}_{n^i_{k_j}}\otimes x^i_{n^i_{k_j}+1}+\sum_{i=1}^{I_2}\sum_{j=1}^\infty b^i_{m^i_{k_j}}y^{i*}_{m^i_{k_j}}\otimes y^i_{m^i_{k_j}+1}.$$ 
Then 
\begin{eqnarray*}
\|K\|&\le& \sum_{i=1}^{I_1}\sum_{j=1}^\infty |a^i_{n^i_{k_j}}|\|x^{i*}_{n^i_{k_j}}\|+\sum_{i=1}^{I_2}\sum_{j=1}^\infty |b^i_{m^i_{k_j}}|\|y^{i*}_{m^i_{k_j}}\|\\
&<&\sum_{i=1}^{I_1}\sum_{j=1}^\infty\epsilon^i_j+\sum_{i=1}^{I_2}\sum_{j=1}^\infty\delta^i_j<\epsilon/2
\end{eqnarray*}

\begin{quote}
\begin{claim}
$\mathcal{N}^\infty(S-K)$ is dense in $X$.
\end{claim}
\begin{proof}
First, we will show that $x^i_j\in \mathcal{N}^\infty(S-K)$ for each $i,j$.
Note that $(S-K)x^i_j=Sx^i_j=a^i_jx^i_{j+1}$ for $(i,j)\neq (i,n^i_{k_m})$.
We have $(S-K)x^i_{n^i_{k_j}}=a^i_{n^i_{k_j}}x^i_{n^i_{k_j}+1}-a^i_{n^i_{k_j}}x^i_{n^i_{k_j}+1}=0$.
Suppose $n^i_{k_j}< j\le m^i_k$, say $j=m^i_k-\ell$ , then  $(S-K)^{\ell+1}x^i_j=(a^i_ja^i_{j+1}\ldots a^i_{m^i_k-1})(S-K)x^i_{j+l})=0$.
So $x^i_j\in \mathcal{N}^\infty(S-K)$. 

Similarly $y^i_j\in \mathcal{N}^\infty(S-K)$ for $j\ge m^i_1$. 
But we also have that $(S-K)y^i_j=Sy^i_j$ for $j<m^i_1$. 
Let $j=m^i_1-\ell$ for some $\ell\in\IN$.
Then $(S-K)^{\ell+1}y^i_j=(b^i_jb^i_{j+1}\ldots b^i_{m^i_1-1})(S-K)y^i_{j+l}=0$ so $y^i_j\in\mathcal{N}^\infty(S-K)$.

Then $\SPAN\{x^i_j\}\cup\{y^i_j\}\subset \mathcal{N}^\infty (S-K)$ and therefore $\mathcal{N}^\infty(S-K)$ is dense in $X$.
\end{proof}
\end{quote}

Therefore by Theorem \ref{thm:genker} we have that there is a 1-shift $B$ such that $\|S-K-B\|<\epsilon/2$.
So $\|S-B\|\le \|S-(S-K)\|+\|(S-K)-B\|<\epsilon$ and therefore (via Corollary \ref{cor:bns}) $\lambda +S$ is in the closure of the mixing operators.
\end{proof}

\begin{cor}
Let $X$ be a separable infinite dimensional Banach space with a Schauder basis.
If $S$ is a weighted forward or bilateral shift on a normalised basis for $X$ with weights $w_j$ such that there exists an increasing sequence $n_j$ with $|w_{n_j}|\rightarrow 0$ 
then $\lambda +S$ is in the closure of the mixing operators.

In particular if $S$ is a quasinilpotent forward or bilateral shift on a normalised basis then $\lambda +S$ is in the closure of the mixing operators.
\end{cor}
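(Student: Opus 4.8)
The plan is to reduce the corollary to \autoref{lem:gen} by verifying its hypotheses. First I would set up the two cases cleanly. For a weighted forward shift $S$ on a normalised Schauder basis $\{x_n\}_{n=1}^\infty$, we are in the situation of \autoref{lem:gen} with $I_1 = 1$ and $I_2 = 0$: there is a single one-sided chain $x^1_j = x_j$ with weights $a^1_j = w_j$. For a bilateral shift on a normalised basis $\{x_n\}_{n=-\infty}^\infty$, we are in the situation with $I_1 = 0$ and $I_2 = 1$: there is a single two-sided chain $y^1_j = x_j$ with weights $b^1_j = w_j$. In both cases the basis is normalised and fundamental, and being a Schauder basis it is in particular minimal, so the required normalised minimal fundamental set hypothesis of \autoref{lem:gen} holds.

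The key remaining point is the decay hypothesis. Here I would exploit that for a normalised Schauder basis the biorthogonal functionals are \emph{uniformly} bounded: there is a basis constant bounding $\sup_n \|x_n^*\|$. Indeed, \autoref{lem:gen} asks for an increasing sequence along which $|w_{n_j}|\,\|x_{n_j}\| \to 0$ (and $|w_{n_j}|\,\|x^*_{n_j}\|\to 0$), whereas we are only given $|w_{n_j}| \to 0$. But since the basis is normalised we have $\|x_{n_j}\| = 1$, and since $\sup_n \|x_n^*\| =: C < \infty$ we get $|w_{n_j}|\,\|x_{n_j}^*\| \le C\,|w_{n_j}| \to 0$. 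Thus the single-chain decay hypothesis of \autoref{lem:gen} is satisfied along the given sequence $n_j$, and \autoref{lem:gen} yields that $\lambda + S$ lies in $\overline{\MiX}$.

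For the final sentence, I would recall that a quasinilpotent operator has spectral radius $0$. For a weighted shift, there is a standard formula expressing the spectral radius in terms of the weights (the spectral radius is $\lim_n \bigl(\sup_k |w_{k+1}\cdots w_{k+n}|\bigr)^{1/n}$, or on a normalised basis an equivalent $\limsup$ of products of consecutive weights). Quasinilpotence forces this limit to be $0$, from which one extracts an increasing sequence $n_j$ with $|w_{n_j}| \to 0$: if no such sequence existed, the weights would be bounded below away from $0$ past some index, making the product-based spectral radius strictly positive, a contradiction. Hence the hypothesis of the first part is met and the conclusion follows.

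I expect the main obstacle to be the last extraction step, getting $|w_{n_j}| \to 0$ from quasinilpotence. The subtlety is that spectral radius zero controls \emph{products} of consecutive weights, not individual weights, so a priori the individual $|w_n|$ need not tend to $0$ along the whole sequence (they could oscillate). The clean way around this is the contrapositive: if $\liminf_n |w_n| = c > 0$, then for large $n$ each factor in any long product of consecutive weights is at least $c/2$ in modulus, forcing the $n$-th root of such products to be bounded below by $c/2$, so the spectral radius is at least $c/2 > 0$, contradicting quasinilpotence. Therefore $\liminf_n |w_n| = 0$, which is exactly what is needed to choose the increasing sequence $n_j$ with $|w_{n_j}| \to 0$.
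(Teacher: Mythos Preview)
Your proposal is correct and follows essentially the same approach as the paper: reduce to Proposition~\ref{lem:gen} with $I_1=1,\,I_2=0$ (forward) or $I_1=0,\,I_2=1$ (bilateral), invoke the uniform bound $\sup_n\|x_n^*\|<\infty$ for a normalised Schauder basis to get the required decay, and for the quasinilpotent case argue by contradiction that $\liminf_n|w_n|>0$ would force the spectral radius to be positive via $\|S^n e_j\|=|w_j\cdots w_{j+n-1}|\ge (c/2)^n$. The paper's write-up of the contrapositive is slightly more direct (it applies $\|S^n\|<\epsilon^n$ to a single basis vector rather than invoking a spectral-radius formula for shifts), but the substance is the same.
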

\begin{proof}
Suppose $e_n$ is our normalised basis. By a remark at the beginning of \cite[1.b]{LindenstraussTzafriri} we have that $\sup_n\|e^*_n\|=K<\infty$.
Then $|w_{n_j}|\|e_n^*\|\le K|w_{n_j}|\rightarrow 0$ and the rest follows from Proposition \ref{lem:gen} by setting
$I_1=1$, $I_2=0$ and $x^1_j=e_j$ for the forward shift case and 
$I_1=0$, $I_2=1$ and $y^1_j=e_j$ for the bilateral shift case.

For the second part we need only show that if the shift is quasinilpotent then there must be a subsequence of the weights going to 0.
Assume for the sake of contradiction that this is not true.
Then there is $j\in\IN$ such that $C=\inf_{n\ge j}\{|w_n|\}\neq 0$.
Choose $0<\epsilon<C$.
Then there is $N\in\IN$ such that $\|S^nx\|<\epsilon^n\|x\|$ for all $n>N$.
$$C^n\le \|w_jw_{j+1}\ldots w_{j+n-1}e_{j+n}\|=\|S^ne_j\|<\epsilon^n<C^n$$
for $n>N$.

\end{proof}

\end{document}